\newcommand{\Ra}{\Rightarrow}
\newcommand{\IR}{\mathbb R}
\newcommand{\IQ}{\mathbb Q}
\newcommand{\w}{\omega}
\newcommand{\LI}{LI}
\newcommand{\SI}{SI}
\newcommand{\K}{\mathcal K}
\newcommand{\F}{\mathcal F}
\newcommand{\U}{\mathcal U}
\newcommand{\V}{\mathcal V}
\newtheorem{theorem}{Theorem}
\newtheorem{corollary}{Corollary}
\newtheorem{example}{Example}
\theoremstyle{definition}
\newtheorem{remark}{Remark}
\title{The continuity properties of compact-preserving functions}
\author{Taras Banakh, Artur Bartoszewicz,  Marek Bienias, Szymon G\l\c ab}
\address{T.Banakh: Ivan Franko University of Lviv (Ukraine) and Jan Kochanowski Uniwersity in Kielce (Poland)}
\email{t.o.banakh@gmail.com}
\address{A.Bartoszewicz,  M.Bienias, S.G\l\c ab: Institute of Mathematics, Technical University of \L\'od\'z, W\'olcza\'nska 215, 93-005
\L\'od\'z, Poland}
\email{arturbar@p.lodz.pl, marek.bienias88@gmail.com, szymon.glab@p.lodz.pl}
\thanks{The first author has been partially financed by NCN grant  DEC-2011/01/B/ST1/01439. The second author has been supported by the Polish Ministry of Science and Higher Education Grant No N N201 414939 (2010-2013). The last two authors was supported by the Polish Ministry of Science and Higher Education Grant No IP2011 014671 (2012-2014).}
\subjclass{54C05, 54D30}
\keywords{compact-preserving function, continuous function}
\begin{document}
\begin{abstract} A function $f:X\to Y$ between topological spaces is called {\em compact-preserving} if the image $f(K)$ of each compact subset $K\subset X$ is compact. We prove that a function $f:X\to Y$ defined on a strong Fr\'echet space $X$ is compact-preserving if and only if for each point $x\in X$ there is a compact subset $K_x\subset Y$ such that for each neighborhood $O_{f(x)}\subset Y$ of $f(x)$ there is a neighborhood $O_x\subset X$ of $x$ such that $f(O_x)\subset O_{f(x)}\cup K_x$ and the set $K_x\setminus O_{f(x)}$ is finite. This characterization is applied to give an alternative proof of a classical characterization of continuous functions on locally connected metrizable spaces as functions that preserve compact and connected sets. Also we show that for each compact-preserving function $f:X\to Y$ defined on a (strong) Fr\'echet space $X$, the restriction $f|\LI'_f$ (resp. $f|\LI_f)$ is continuous. Here $\LI_f$ is the set of points $x\in X$ of local infinity of $f$ and $\LI'_f$ is the set of non-isolated points of the set $\LI_f$. Suitable examples show that the obtained results cannot be improved.
\end{abstract}

\maketitle

It is known that a function $f:\IR\to \IR$ is continuous if and only if $f$ is {\em preserving} in the sense that for each compact subset $K\subset \IR$ the image $f(K)$ is compact and for each connected subset $C\subset\IR$ the image $f(C)$ is connected. The first result of this sort has appeared in 1926 \cite{Rowe} and later was rediscovered and generalized by many authors: \cite{Whyburn27}, \cite{KU}, \cite{Halfar}, \cite{Whyburn65}, \cite{White68},  \cite{McM}, \cite{White71}, \cite{Vel}, \cite{AP}, \cite{GJSS}, \cite{GMS}.

In this paper we study what remains of the continuity of a function $f:X\to Y$ if it merely preserves compact sets. Of course such a function can be everywhere discontinuous as shown by the classical Dirichlet function $\delta_\IQ:\IR\to\{0,1\}$ equal 1 on rationals and 0 on irrationals. In this case the compact-preserving property follows from the local finity.

We define a function $f:X\to Y$ between topological spaces to be
\begin{itemize}
\item {\em compact-preserving} if the image $f(K)$ of each compact subset $K\subset X$ is compact;
\item {\em locally finite} at a point $x\in X$ if for some neighborhood $O_x\subset X$ of $x$ the image  $f(O_x)$ is finite;
\item {\em locally infinite} at $x\in X$ if $f$ is not locally finite at $x$;
\item {\em sequentially infinite} at $x\in X$ if there is a sequence $\{x_n\}_{n\in\w}$ that converges to $x$ and has infinite image $\{f(x_n)\}_{n\in\w}$.
\end{itemize}
By $\LI_f$ and $\SI_f$ we denote the sets of points $x\in X$ at which the function $f$ is  locally infinite and sequentially infinite, respectively. It is clear that $\SI_f\subset\LI_f$. We shall show that for a compact-preserving function $f:X\to Y$ defined on a Fr\'echet space $X$ the restriction $f|\SI_f$ is continuous and the set $\SI_f$ contains the set $\LI'_f$ of all non-isolated points of $\LI_f$.

We recall that a topological space $X$ is
\begin{itemize}
\item {\em first countable} if each point $x\in X$ possesses a countable base of neighborhoods;
\item {\em Fr\'echet} if for each subset $A\subset X$ and a point $a\in\bar A$ from its closure there is a sequence $\{a_n\}_{n\in\w}\subset A$ that converges to $a$;
\item {\em strong Fr\'echet} if for any decreasing sequence $A_n$, $n\in\w$, of subsets of $X$ and a point $a\in\bigcap_{n\in\w}\bar A_n$ there is a sequence of points $a_n\in A_n$, $n\in\w$, that converges to $a$.
\item {\em sequential} if for each non-closed subset $A\subset X$ there is a sequence $\{a_n\}_{n\in\w}\subset A$ that converges to a point $a\in X\setminus A$;
\end{itemize}
These notions relate as follows:
$$\mbox{first countable $\Ra$ strong Fr\'echet $\Ra$ Fr\'echet $\Ra$ sequential.}$$
By \cite[2.4.G]{En}, a topological space $X$ is Fr\'echet if and only if each subspace of $X$ is sequential.
For a function $f:X\to Y$ between topological spaces and a point $x\in X$ consider the set
$$f[x]=\{y\in Y:x\in\mathrm{cl}_X(f^{-1}(y))\}=\textstyle{\bigcap}\{f(O_x):O_x\mbox{ is a neighborhood of $x$ in $X$}\},$$
which can be interpreted as the oscillation of $f$ at $x$. If $f$ is continuous at $x$ and $Y$ is a $T_1$-space, then the  set $f[x]$ coincides with the singleton $\{f(x)\}$.

The following theorem implies that for a compact-preserving function $f:X\to Y$ from a Fr\'echet space $X$ to a Hausdorff space $Y$, the continuity of $f$ at a point $x\in X$ is equivalent to the equality $f[x]=\{f(x)\}$.

\begin{theorem}\label{t1} If $f:X\to Y$ is a compact-preserving function from a (strong) Fr\'echet space $X$ to a Hausdorff topological space $Y$, then for each point $x\in X$ and a neighborhood $O_{f(x)}$ of $f(x)$ in $Y$ there is a neighborhood $O_x$ of $x$ in $X$ such that $f(O_x)\subset f[x]\cup O_{f(x)}$ (and the set $f[x]\setminus O_{f(x)}$ is finite).
\end{theorem}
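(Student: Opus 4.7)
The plan is to prove the theorem in two parts: first, assuming only Fr\'echetness, establish the inclusion $f(O_x)\subset f[x]\cup O_{f(x)}$ for a suitable neighborhood $O_x$ of $x$; then, using strong Fr\'echetness, derive the parenthetical finiteness of $f[x]\setminus O_{f(x)}$. Both parts argue by contradiction and exploit the same core mechanism: a hypothetical failure is converted into a sequence $z_n\to x$ for which the compact set $K=\{z_n:n\in\w\}\cup\{x\}$ has a compact image $f(K)$ whose structure is incompatible with an accumulation point produced by the Bolzano--Weierstrass property of compact spaces.

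For the first part, assume no such $O_x$ exists; then the set $B:=\{x'\in X: f(x')\notin f[x]\cup O_{f(x)}\}$ has $x$ in its closure, and Fr\'echetness produces $b_n\in B$ with $b_n\to x$. Compactness of $f(K)$ in the Hausdorff space $Y$ makes $f(K)$ closed. If some value $y$ is taken infinitely often by the sequence $(f(b_n))$, then $x\in\mathrm{cl}_X(f^{-1}(y))$, giving $y\in f[x]$ --- contradicting $f(b_n)\notin f[x]$. Otherwise, after thinning to a subsequence with pairwise distinct values, all $f(b_n)$ lie in the closed set $Y\setminus O_{f(x)}$, so they form an infinite subset of the compact set $C:=f(K)\cap(Y\setminus O_{f(x)})$ and must admit an accumulation point $y^*\in C$. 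Since $y^*\in Y\setminus O_{f(x)}$ and $f(K)=\{f(b_n):n\in\w\}\cup\{f(x)\}$, we must have $y^*=f(b_{n_0})$ for a unique index $n_0$. Deleting this one index, the set $\{b_n:n\neq n_0\}\cup\{x\}$ is still a convergent sequence with its limit, hence compact, so its image $f(K')$ is compact and closed; by the accumulation property $y^*\in\overline{\{f(b_n):n\neq n_0\}}\subset f(K')$, yet by distinctness $y^*$ equals none of the elements $f(b_n)$ ($n\neq n_0$) and differs from $f(x)$ --- the desired contradiction.

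For the finiteness statement, suppose $f[x]\setminus O_{f(x)}$ contains a sequence of pairwise distinct points $(y_n)_{n\in\w}$. Each $y_n\in f[x]$ forces $x\in\mathrm{cl}_X(f^{-1}(y_n))$, so the decreasing sets $A_n:=\bigcup_{k\geq n}f^{-1}(y_k)$ all contain $x$ in their closures. Strong Fr\'echetness yields $a_n\in A_n$ with $a_n\to x$; since $f(a_n)=y_{k_n}$ for some $k_n\geq n$, the image sequence visits infinitely many distinct values $y_k$. Passing to a subsequence along which the image values are pairwise distinct, the accumulation-point argument of Part 1 applied to $K=\{a_n\}\cup\{x\}$ produces the required contradiction.

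The main obstacle is the pinning-down step for the accumulation point $y^*$: it must simultaneously disagree with $f(x)$ (ensured by $y^*\in Y\setminus O_{f(x)}$) and with every individual image $f(b_n)$ (ensured by the trick of trimming a single index and reinvoking compact-preservation on the smaller compact set). Without Hausdorffness to separate $y^*$ from the sequence points, and without compact-preservation to keep the trimmed image closed, this maneuver would break down; and the passage from ``$x\in\overline{A_n}$ for each $n$'' to ``$a_n\in A_n$ with $a_n\to x$'' is the sole reason why the strong Fr\'echet hypothesis is essential for finiteness.
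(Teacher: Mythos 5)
Your proposal is correct and follows essentially the same route as the paper's proof: in both parts you convert the hypothetical failure into a convergent $f$-injective sequence, extract an accumulation point $y^*\ne f(x)$ of the compact image, and contradict compact-preservation by deleting the preimage of $y^*$ from the convergent sequence (your ``closedness of $f(K')$'' formulation is just the paper's ``$f(S)=K\setminus\{y\}$ is not compact'' phrased via closedness in the Hausdorff space $Y$). The only cosmetic differences are that you locate $y^*$ inside $f(K)\cap(Y\setminus O_{f(x)})$ where the paper instead observes that $f(x)$ is isolated in $K$, and you dispose of the $f$-constant case directly via the definition of $f[x]$ rather than via the disjointness $A\cap f^{-1}(f[x])=\emptyset$.
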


\begin{proof} Assume conversely that there is a neighborhood $O_{f(x)}\subset Y$ of $f(x)$ such that
$f(O_x)\not\subset f[x]\cup O_{f(x)}$ for each neighborhood $O_x\subset X$ of $x$. This means that the set $A=f^{-1}\big(Y\setminus (f[x]\cup O_{f(x)})\big)$ contains the point $x$ in its closure. By the Fr\'echet property of $X$, the set $A$ contains a sequence $(x_n)_{n\in\w}$ that converges to the point $x$. Replacing this sequence by a suitable subsequence we can assume that either the set $\{f(x_n)\}_{n\in\w}$ is a singleton or else $f(x_n)\ne f(x_m)$ for all $n\ne m$. In the first case the singleton $\{f(x_0)\}=\{f(x_n)\}_{n\in\w}$ lies in the set $f[x]$, which is not possible as $x_0\in A\cap f^{-1}(f[x])=\emptyset$. So, we have the second option which implies that the set $K=\{f(x)\}\cup\{f(x_n)\}_{n\in\w}$ is infinite. Since the function $f$ is compact-preserving, the set $K$ is compact and being infinite, has an accumulating point $y\in K$, which is not equal to $f(x)$ (as $f(x)$, being a unique point of the intersection $K\cap O_{f(x)}$, is isolated in $K$). Then the preimage $f^{-1}(y)$ does not contain the point $x$ and hence the set $S=\{x\}\cup\{x_n\}_{n\in\w}\setminus f^{-1}(y)$ is compact. On the other hand, its image $f(S)=K\setminus\{y\}$ is not compact, which contradicts the compact-preserving property of $f$.

Next, assuming that the space $X$ is strong Fr\'echet, we shall prove that the complement $f[x]\setminus O_{f(x)}$ is finite. Assume conversely that this complement contains a sequence $\{y_n\}_{n\in\w}$ of pairwise distinct points. For every $n\in\w$ consider the set $A_n=\bigcup_{m\ge n}f^{-1}(y_m)$ and observe that $x\in\bigcap_{n\in\w}\bar A_n$. Since $X$ is strong Fr\'echet, there is a sequence of points $a_n\in A_n$, $n\in\w$, which converges to $x$. The compact-preserving property of $f$ guarantees that the set $K=\{f(x)\}\cup\{f(a_n)\}_{n\in\w}\subset\{f(x)\}\cup\{y_n\}_{n\in\w}$ is compact. Being infinite, this set has an accumulation point $y\in K$, which is not equal to the isolated point $f(x)$ of $K$. Then the set $S=\{x\}\cup\{a_n\}_{n\in\w}\setminus f^{-1}(y)$ is compact while its image $f(S)=K\setminus\{y\}$ is not. But this contradicts the compact-preserving property of $f$.
\end{proof}

Theorem~\ref{t1} implies the following characterization of compact-preserving functions defined on strong Fr\'echet spaces.

\begin{corollary} A function $f:X\to Y$ from a (strong Fr\'echet) space $X$ to a Hausdorff space $Y$ is compact-preserving if (and only if) for each point $x\in X$ there is a subset $K_x\subset Y$ such that for each neighborhood $O_{f(x)}\subset Y$ of $f(x)$ there is a neighborhood $O_x\subset X$ of $x$ such that $f(O_x)\subset O_{f(x)}\cup K_x$ and $K_x\setminus O_{f(x)}$ is finite.
\end{corollary}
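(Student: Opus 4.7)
The corollary packages Theorem~\ref{t1} with its converse, so I would prove the two implications separately. The ``only if'' direction is the one that uses the strong Fr\'echet hypothesis and is immediate from the theorem just proved; the ``if'' direction, in contrast, needs no countability assumption on $X$ and reduces to a direct open-cover argument.

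For the ``only if'' direction, my plan is to set $K_x:=f[x]$ for every $x\in X$ and simply invoke Theorem~\ref{t1}: given any neighborhood $O_{f(x)}\subset Y$ of $f(x)$, that theorem supplies a neighborhood $O_x$ of $x$ with $f(O_x)\subset f[x]\cup O_{f(x)}=K_x\cup O_{f(x)}$ and with the remainder $f[x]\setminus O_{f(x)}=K_x\setminus O_{f(x)}$ finite, which is exactly what the corollary asks for.

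For the ``if'' direction, I would fix a compact set $K\subset X$ and an arbitrary open cover $\V$ of $f(K)$ in $Y$, and extract a finite subcover. For each $x\in K$ pick $V_x\in\V$ with $f(x)\in V_x$ and apply the hypothesis to the neighborhood $V_x$ of $f(x)$ to obtain an open neighborhood $O_x$ of $x$ in $X$ with $f(O_x)\subset V_x\cup K_x$ and with $K_x\setminus V_x$ finite. Compactness of $K$ yields points $x_1,\dots,x_n\in K$ with $K\subset O_{x_1}\cup\dots\cup O_{x_n}$, whence
$$f(K)\;\subset\;\bigcup_{i=1}^n f(O_{x_i})\;\subset\;\bigcup_{i=1}^n V_{x_i}\cup\bigcup_{i=1}^n(K_{x_i}\setminus V_{x_i}).$$
Since each $K_{x_i}\setminus V_{x_i}$ is finite, the second union is a finite subset of $Y$; its points that happen to lie in $f(K)$ can therefore be covered by finitely many further members of $\V$, and adjoining these to $V_{x_1},\dots,V_{x_n}$ produces the desired finite subcover.

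The only mildly delicate point is that the ``noise'' sets $K_x$ are not under one's control and could a priori be enormous; what saves the argument is that the condition ``$K_x\setminus V_x$ finite'' says $K_x$ lies in the target open set up to a finite error, so after passing to a finite subcover of $K$ only finitely many residual points remain to be mopped up by $\V$. I therefore do not anticipate a serious obstacle in either direction.
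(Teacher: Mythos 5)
Your proposal is correct and follows essentially the same route as the paper: the ``only if'' direction is obtained by taking $K_x=f[x]$ and citing Theorem~\ref{t1}, and the ``if'' direction is the same open-cover argument, using the finiteness of $K_{x_i}\setminus V_{x_i}$ to mop up the finitely many residual points of $f(K)$ with finitely many extra members of the cover. No gaps.
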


\begin{proof} The ``only if''part follows from Theorem~\ref{t1}.

To prove the ``if'' part, assume that for each point $x\in X$ there is a subset $K_x\subset Y$ such that for each neighborhood $O_{f(x)}\subset Y$ of $f(x)$ there is a neighborhood $O_x\subset X$ of $x$ such that $f(O_x)\subset O_{f(x)}\cup K_x$ and $K_x\setminus O_{f(x)}$ is finite. Given a compact subset $K\subset X$ we need to prove that the image $f(K)$ is compact. Let $\U$ be a cover of $f(K)$ by open subsets of $Y$. For each point $x\in X$ find an open set $O_{f(x)}\in\U$ that contains the point $f(x)\in f(K)$. Next, find an open neighborhood $O_x\subset X$ of $x$ such that $f(O_x)\subset O_{f(x)}\cup K_x$. The finiteness of the set $K_x\setminus O_{f(x)}$ implies that the set $f(O_x)\setminus O_{f(x)}$ is finite.

The compactness of the set $K$ guarantees that the open cover $\{O_x:x\in K\}$ of $K$ contains a finite subcover $\{O_x:x\in F\}$. Choose a finite subfamily $\F\subset\U$ whose union $\cup\F$ contains  the finite subset $\bigcup_{x\in F}f(K\cap O_x)\setminus O_{f(x)}$ of $f(K)\subset\bigcup\U$. Then $\V=\F\cup\{O_{f(x)}:x\in F\}$ is the required finite subcover of $f(K)$ witnessing that $f(K)$ is compact.
\end{proof}

Theorem~\ref{t1} can be applied to give an alternative proof of the mentioned ``preserving'' characterization of continuous functions. We recall that a function $f:X\to Y$ has the {\em Darboux property} if the image $f(C)$ of each connected subset $C\subset X$ is connected.

\begin{corollary}\label{c2} A function $f:X\to Y$ from a locally connected strong Fr\'echet space $X$ to a Hausdorff space $Y$ is continuous if and only $f$ is compact-preserving and has the Darboux property.
\end{corollary}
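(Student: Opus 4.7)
The \emph{only if} direction is standard, since continuous maps into Hausdorff spaces preserve both compactness and connectedness.

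For the \emph{if} direction, the plan is to prove continuity at an arbitrary point $x\in X$. Since $Y$ is Hausdorff, continuity of $f$ at $x$ is equivalent to the equality $f[x]=\{f(x)\}$, as observed before Theorem~\ref{t1}. Suppose for contradiction that some $y\in f[x]\setminus\{f(x)\}$ exists. I would first use Hausdorffness to choose disjoint open sets $O_{f(x)}\ni f(x)$ and $V\ni y$, then invoke Theorem~\ref{t1} to obtain a neighborhood $O_x$ of $x$ with $f(O_x)\subset O_{f(x)}\cup f[x]$ and the set $f[x]\setminus O_{f(x)}$ finite; enumerate its elements as $y_1=y,y_2,\ldots,y_n$. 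Next, using local connectedness of $X$, shrink $O_x$ to a connected neighborhood $W$ of $x$, and set $C:=f(W)$. The Darboux property makes $C$ connected, and $C$ contains both $f(x)$ and $y$ (the latter because $y\in f[x]$ forces $W\cap f^{-1}(y)\ne\emptyset$); moreover $C\subset O_{f(x)}\cup\{y_1,\ldots,y_n\}$.

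The main step is to use Hausdorffness to isolate $y=y_1$ inside $C$. For each $i\ge 2$ I would pick an open neighborhood of $y_1$ disjoint from some open neighborhood of $y_i$; intersecting these with $V$ yields an open $W^{*}\subset Y$ with $y_1\in W^{*}$, $W^{*}\cap O_{f(x)}=\emptyset$, and $y_i\notin W^{*}$ for $i\ne 1$. Then $W^{*}\cap C=\{y_1\}$, so $\{y_1\}$ is a proper open subset of $C$; being closed in $C$ as well (since $Y$ is $T_1$ and $|C|\ge 2$), this contradicts the connectedness of $C$. Therefore $f[x]=\{f(x)\}$, and $f$ is continuous at $x$.

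The hardest part is the concluding separation step, which relies crucially on the finiteness of $f[x]\setminus O_{f(x)}$ guaranteed by Theorem~\ref{t1}; the remaining ingredients (shrinking to a connected neighborhood, applying Darboux to get a connected image, and reducing continuity to the equality $f[x]=\{f(x)\}$) are routine.
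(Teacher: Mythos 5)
Your proposal is correct and follows essentially the same route as the paper: both reduce to Theorem~\ref{t1}, pass to a connected neighborhood whose image is connected by the Darboux property, and then use the finiteness of $f[x]\setminus O_{f(x)}$ together with Hausdorff separation to isolate the offending point $y$ in that connected image, contradicting connectedness. The only cosmetic difference is that the paper isolates $y$ inside $f[x]\cup O_{f(x)}$ before taking the image, while you isolate it directly inside $f(W)$.
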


\begin{proof}  The ``only if'' part is trivial. To prove the ``if'' part, assume that a function $f:X\to Y$ is compact-preserving and has the Darboux property. Assuming that the function $f$ is discontinuous at some point $x\in X$, and applying Theorem~\ref{t1}, we conclude that the set $f[x]$ contains a point $y$, distinct from $f(x)$. Since $Y$ is Hausdorff, we can find disjoint open neighborhoods
$O_y$ and  $O_{f(x)}$ of the points $y$ and $f(x)$ in $Y$, respectively.
By Theorem~\ref{t1}, the complement $F=f[x]\setminus O_{f(x)}$ is finite. Replacing the neighborhood $O_y$ by $O_y\setminus (F\setminus\{y\})$, we can assume that $O_y\cap(f[x]\cup O_{f(x)})=\{y\}$, which means that $y$ is an isolated point of the space $f[x]\cup O_{f(x)}$.

By Theorem~\ref{t1}, there is a neighborhood $O_x\subset X$ of $x$ such that $f(O_x)\subset f[x]\cup O_{f(x)}$. Since the space $X$ is locally connected, we can choose the neighborhood $O_x$ to be connected. By the Darboux property, the image $f(O_x)$ is connected and contains the point $y$ by the definition of the set $f[x]\ni y$. On the other hand, the point $y$ is isolated in $f(O_x)$, which implies that $f(O_x)$ cannot be connected. This contradiction completes the proof.
\end{proof}

\begin{remark} In fact, the characterization of the continuity given in Corollary~\ref{c2} holds for any function defined on a locally connected Fr\'echet Hausdorff space, see \cite{McM}.
\end{remark}

Now we shall study the continuity properties of compact-preserving functions in more details.
Let $f:X\to Y$ be a function between topological spaces. A sequence $(x_n)_{n\in\w}$ of points of $X$ will be called
\begin{itemize}
\item {\em injective} if $x_n\ne x_m$ for any distinct numbers $n,m\in\w$;
\item {\em $f$-injective} if $f(x_n)\ne f(x_m)$ for any distinct numbers $n,m\in\w$;
\item {\em $f$-constant} if $\{f(x_n)\}_{n\in\w}$ is a singleton.
\end{itemize}
It is clear that each sequence $(x_n)_{n\in\w}$ in $X$ with finite (resp. infinite) image $\{f(x_n)\}_{n\in\w}$ contains an $f$-constant (resp. $f$-injective) subsequence. Consequently, each sequence in $X$ contains an $f$-constant or $f$-injective subsequence.

The following theorem proved in \cite[Lemma 2]{McM} shows that $f$-injective convergent sequences do not see the discontinuity of compact-preserving functions.

\begin{theorem}\label{t2} For any compact-preserving function $f:X\to Y$ from a topological space $X$ to a Hausdorff space $Y$ and each $f$-injective sequence $\{x_n\}_{n\in\w}\subset X$ that converges to a point $x\in X$ the sequence $(f(x_n))_{n\in\w}$ converges to the point $f(x)$.
\end{theorem}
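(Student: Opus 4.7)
The plan is to argue by contradiction, exploiting the same ``remove one fibre and get a non-compact image'' trick that drives the proof of Theorem~\ref{t1}. The hypothesis of $f$-injectivity is exactly what makes this trick work, because each value is attained by at most one $x_n$, so removing a single fibre $f^{-1}(y)$ from $\{x\}\cup\{x_n\}$ discards at most one isolated point.

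First I would note that the set $S_0=\{x\}\cup\{x_n:n\in\w\}$ is compact as a convergent sequence together with its limit, and therefore by the compact-preserving hypothesis its image $K=\{f(x)\}\cup\{f(x_n):n\in\w\}$ is a compact subset of the Hausdorff space $Y$. Assuming conversely that $(f(x_n))_{n\in\w}$ does not converge to $f(x)$, I would pick an open neighborhood $U\subset Y$ of $f(x)$ and pass to a subsequence $(x_{n_k})_{k\in\w}$ so that $f(x_{n_k})\notin U$ for every $k$. The set $S=\{x\}\cup\{x_{n_k}:k\in\w\}$ is again compact and its image $K'=\{f(x)\}\cup\{f(x_{n_k}):k\in\w\}\subset K$ is compact. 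Since the sequence is $f$-injective, $K'$ is infinite, so in the compact Hausdorff space $K'$ the set $\{f(x_{n_k}):k\in\w\}$ has an accumulation point $y\in K'$, and since all points of this set lie outside $U$ while $f(x)\in U$, we get $y\neq f(x)$.

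Next I would form the set $T=S\setminus f^{-1}(y)$. By $f$-injectivity there is at most one index $k_0$ with $f(x_{n_{k_0}})=y$, and since $f(x)\neq y$ the point $x$ is not removed. Hence $T$ differs from $S$ by at most one isolated point, so $T$ is compact. The compact-preserving property then forces $f(T)$ to be compact, but
\[
f(T)=K'\setminus\{y\}=\{f(x)\}\cup\{f(x_{n_k}):k\in\w,\ k\neq k_0\},
\]
and $y$ remains an accumulation point of $\{f(x_{n_k}):k\neq k_0\}$ since we removed at most one term. Thus $f(T)$ contains $y$ in its closure without containing $y$ itself, so $f(T)$ is not closed in the Hausdorff space $Y$, contradicting compactness of $f(T)$.

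The conceptual obstacle is essentially clerical: making sure that the subsequence $(x_{n_k})$, the accumulation point $y\in K'$, and the deletion of $f^{-1}(y)$ combine cleanly so that exactly one isolated point of $S$ is removed. The $f$-injectivity of the original sequence handles this automatically, and the Hausdorffness of $Y$ is needed only to turn ``compact but missing an accumulation point'' into the desired contradiction. Once these two hypotheses are in place, the proof reduces to the one-fibre removal argument already developed for Theorem~\ref{t1}.
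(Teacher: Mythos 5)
Your proof is correct and uses the same engine as the paper's: the compactness of the image of the convergent sequence plus limit, and the removal of the fibre $f^{-1}(y)$ of a second accumulation point $y\ne f(x)$ to produce a compact set with non-compact (non-closed) image. The paper packages this slightly differently---showing directly that $f(x)$ is the unique non-isolated point of $f(K)$ rather than extracting a subsequence avoiding a neighborhood---but the argument is essentially identical.
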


\begin{proof} The set $K=\{x\}\cup\{x_n\}_{n\in\w}$ is compact and so is its image $f(K)$. Since the sequence $(x_n)_{n\in\w}$ is $f$-injective, the convergence of $(f(x_n))_{n\in\w}$ to $f(x)$ will follow as soon as we check that $f(x)$ is a unique non-isolated point of the compact space $f(K)$. Assuming that $y\ne f(x)$ is another non-isolated point of $f(K)$, we observe that the set $C=K\setminus f^{-1}(y)$ is compact but its image $f(C)=f(K)\setminus\{y\}$ is not, which is impossible as $f$ is compact-preserving.
\end{proof}

Now we establish the promised continuity of a compact-preserving function $f$ on the set $\SI_f$ of all points $x\in X$ at which $f$ is sequentially infinite.

\begin{theorem}\label{t3} For each compact-preserving function $f:X\to Y$ from a Fr\'echet space $X$ to a Hausdorff space the restriction $f|\SI_f$ is continuous.
\end{theorem}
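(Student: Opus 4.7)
The plan is to exploit sequentiality. Since $X$ is Fr\'echet, every subspace of $X$ is sequential (by \cite[2.4.G]{En}), so $\SI_f$ inherits a sequential topology and the continuity of $f|\SI_f$ reduces to its sequential continuity. I would therefore fix $x\in\SI_f$ and a sequence $(x_n)_{n\in\w}\subset\SI_f$ with $x_n\to x$ and aim to show $f(x_n)\to f(x)$. By the standard subsubsequence criterion it is enough to prove that every subsequence of $(x_n)$ has a subsubsequence whose images converge to $f(x)$; and every sequence contains an $f$-constant or $f$-injective subsubsequence (the dichotomy observed just before Theorem~\ref{t2}). The $f$-injective branch is immediate from Theorem~\ref{t2}.

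So the work concentrates in the $f$-constant case: assume $f(x_n)=c$ for all $n$ and argue, for contradiction, that $c\ne f(x)$. Using the Hausdorff property of $Y$, pick disjoint open sets $U\ni f(x)$ and $V\ni c$; note that $f(x)\notin\bar V$ because $U$ is an open neighborhood of $f(x)$ disjoint from $V$. For each $n$, the hypothesis $x_n\in\SI_f$ provides an $f$-injective sequence $(y^{(n)}_m)_{m\in\w}$ converging to $x_n$; Theorem~\ref{t2} yields $f(y^{(n)}_m)\to f(x_n)=c$, so after passing to tails I may assume $f(y^{(n)}_m)\in V\setminus\{c\}$ with the values in each row pairwise distinct. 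Set $A=\bigcup_{n,m}\{y^{(n)}_m\}$; then $x\in\bar A$ since each $x_n$ lies in the closure of its row, which is contained in $\bar A$, and $x_n\to x$. Fr\'echetness of $X$ yields a sequence $(z_j)\subset A$ with $z_j\to x$, and automatically $f(z_j)\in V$. Extracting a final $f$-constant or $f$-injective subsubsequence of $(z_j)$: in the $f$-injective branch, Theorem~\ref{t2} forces $f(z_j)\to f(x)$, contradicting $f(z_j)\in V$ and $f(x)\notin\bar V$.

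The main obstacle is the residual $f$-constant subsubcase, where $f(z_j)=d\in V\setminus\{c\}$ for every $j$, because Fr\'echetness alone (unlike \emph{strong} Fr\'echetness) does not automatically supply a ``diagonal'' sequence visiting infinitely many rows. My plan to overcome this is iterative: on round $i$, replace $A$ by $A\setminus\bigcup_{j<i}f^{-1}(d_j)$---each row loses at most finitely many terms and still converges to $x_n$, so $x\in\bar A$ is preserved---then apply the Fr\'echet property again to extract a new constant $d_i\in V\cap f[x]\setminus\{c,d_1,\dots,d_{i-1}\}$ together with a witnessing sequence $b^{(i)}_k\to x$ with $f(b^{(i)}_k)=d_i$; this produces infinitely many distinct values in $f[x]\cap V$. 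Combining this with Theorem~\ref{t1} applied at $x$ (which provides an open $O_x\ni x$ with $f(O_x)\subset f[x]\cup U$), I would then attempt to assemble a compact subset of $X$ whose $f$-image is an infinite subset of $V$ accumulating at $c$ but missing $c$---such an image fails to be closed in $Y$, contradicting the compact-preserving hypothesis on $f$.
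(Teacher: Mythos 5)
Your argument is sound up to the last step, and up to that point it follows the paper's proof quite closely (negate continuity, split into $f$-constant and $f$-injective cases via Theorem~\ref{t2}, build the double array of $f$-injective rows converging to the $x_n$, extract $(z_j)$ by Fr\'echetness). But the final step --- assembling a compact subset of $X$ whose image is an infinite non-compact subset of $V$ --- has a genuine gap. Your iteration does produce infinitely many distinct values $d_1,d_2,\dots\in f[x]\cap V$, each witnessed by an $f$-constant sequence $b^{(i)}_k\to x$; however, to turn these into a single compact set with infinite image you would need one convergent sequence meeting infinitely many of the fibers $f^{-1}(d_i)$, and extracting such a sequence from the decreasing sets $\bigcup_{j\ge i}f^{-1}(d_j)$ is \emph{exactly} the strong Fr\'echet property, which you do not have. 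A set of the form $\{x\}\cup\bigcup_i\{b^{(i)}_k\}_k$ is not compact in general (the exceptional points of each row outside a neighborhood of $x$ form an infinite discrete set), and indeed the configuration ``infinitely many constants in $f[x]$ bounded away from $f(x)$'' is perfectly compatible with compact-preservation on a Fr\'echet space: on the sequential fan $S(\w)$, the map sending the center to $0$ and the $i$-th spine to a constant $c_i\in[1,2]$ is compact-preserving (every compact set meets only finitely many spines), yet $f[\mathrm{center}]\setminus(-\tfrac12,\tfrac12)$ is infinite. So no contradiction can be extracted from the mere infinitude of $\{d_i\}$; this is also why Theorem~\ref{t1} asserts finiteness of $f[x]\setminus O_{f(x)}$ only under \emph{strong} Fr\'echetness.

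The missing idea, and the one the paper uses, is to eliminate your residual $f$-constant subcase before it arises: after choosing the rows $(y^{(n)}_m)_m$, thin them by a diagonal bookkeeping so that the \emph{entire double array} is $f$-injective, i.e.\ $f(y^{(i)}_k)\ne f(y^{(j)}_m)$ for all distinct pairs $(i,k),(j,m)$. This costs nothing beyond countably many choices (each row has infinitely many distinct values, so at each step one can avoid the finitely many values already used) and requires no strong Fr\'echetness. Once the array is globally $f$-injective, every injective sequence $(z_j)$ extracted from it is automatically $f$-injective, Theorem~\ref{t2} gives $f(z_j)\to f(x)$, and the contradiction with $f(z_j)\in V$ and $f(x)\notin\bar V$ is immediate. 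With that one modification your proof closes and coincides with the paper's.
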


\begin{proof} Assume that $f|\SI_f$ is discontinuous at some point $x\in \SI_f$. In this case we can find a neighborhood $O_{f(x)}$ of $f(x)$ in $Y$ such that the set $A=\SI_f\setminus f^{-1}(O_{f(x)})$ contains the point $x$ in its closure. By the Fr\'echet property of $X$, there is a sequence $\{x_n\}_{n\in\w}\subset A$ that converges to the point $x$. Passing to a suitable subsequence, we can assume that the sequence $(x_n)_{n\in\w}$ is $f$-injective or $f$-constant. Since the sequence $(f(x_n))_{n\in\w}$ does not converge to $f(x)$, it cannot be $f$-injective by Theorem~\ref{t2}.
So, $(x_n)_{n\in\w}$ is $f$-constant and hence $\{f(x_n)\}_{n\in\w}=\{y\}$ for some $y\in Y$. Since $Y$ is Hausdorff, the points $f(x)$ and $y$ have disjoint open neighborhoods $U_{f(x)}\subset O_{f(x)}$ and $U_y\subset Y$, respectively.

For every $n\in\w$ the function  $f$ is sequentially infinite at $x_n$. Consequently, there is a convergent to $x$ sequence $(x_{n,m})_{m\in\w}$ with infinite image $\{f(x_{n,m})\}_{m\in\w}$.
By a standard diagonal argument, we can replace the sequences $(x_{n,m})_{m\in\w}$, $n\in\w$, by suitable subsequences, and assume that the double sequence $(x_{n,m})_{n,m\in\w}$ is {\em $f$-injective} in the sense that $f(x_{i,k})\ne f(x_{j,m})$ for any distinct pairs $(i,k),(j,m)\in\w\times\w$.

By Theorem~\ref{t2}, for every $n\in\w$ the sequence $\big(f(x_{n,m})\big)_{m\in\w}$ converges to $f(x_n)=y$.
Again passing to a subsequence, we can assume that $f(x_{n,m})\in U_y$ for all $m\in\w$.
Now consider the set $\{x_{n,m}\}_{n,m\in\w}$ and observe that it contains the set $\{x\}\cup\{x_n\}_{n\in\w}$ in its closure. Then Fr\'echet property of $X$ yields an injective sequence $\{z_k\}_{k\in\w}\subset \{x_{n,m}\}_{n,m\in\w}$ that converges to $x$. The $f$-injectivity of the double sequence $(x_{n,m})_{n,m\in\w}$ implies the $f$-injectivity of the sequence $(z_k)_{k\in\w}$. By Theorem~\ref{t2}, the sequence $\{f(z_k)\}_{k\in\w}\subset U_y$ converges to $f(x)$, which is not possible as $f(x)$ does not belong to the closure of $U_y$.
\end{proof}

In light of Theorem~\ref{t3} it is important to know the structure of the set $\SI_f$. Let us recall that by $\LI'_f$ we denote the (closed) set of non-isolated points of the (closed) set $\LI_f$ of points $x\in X$ at which the function $f$ is locally infinite.

A topological space $X$ is called {\em sequentially Hausdorff\/} if each convergent sequence in $X$ has a unique limit.

\begin{theorem}\label{t4} For a compact-preserving function $f:X\to Y$ from a sequentially Hausdorff Fr\'echet space $X$ to a Hausdorff space $Y$,
\begin{enumerate}
\item the set $\SI_f$ is closed;
\item $\LI'_f\subset\SI_f\subset\LI_f$;
\item $\SI_f=\LI_f$ provided that $X$ is strong Fr\'echet.
\end{enumerate}
\end{theorem}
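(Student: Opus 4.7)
The plan is to start with two trivial observations and one auxiliary lemma, then handle the three parts in a coordinated fashion. First, $\SI_f \subset \LI_f$ is immediate, since a sequence converging to $x$ with infinite image eventually enters every neighborhood of $x$ and hence forces that neighborhood to have infinite image; also $\LI_f$ is closed because a neighborhood witnessing local finiteness at one of its points witnesses it for every point it contains. The linchpin is an auxiliary fact requiring only Fr\'echet: \emph{if $z \in \LI_f$ and $f[z]$ is finite, then $z \in \SI_f$.} Indeed, local infinity at $z$ and finiteness of $f[z]$ force every neighborhood of $z$ to meet $f^{-1}(Y \setminus f[z])$, so $z$ lies in the closure of that preimage; Fr\'echet then supplies $b_n \to z$ with $f(b_n) \notin f[z]$, and any constant-valued subsequence $f(b_n) = y$ would place $y$ into $f[z]$, contradicting the choice.

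For (1), given $x \in \overline{\SI_f}$, the Fr\'echet property yields $x_n \in \SI_f$ with $x_n \to x$, and sequentially Hausdorff lets me assume $x_n \ne x$. For each $n$ I pick $(z_n^k)_k \to x_n$ with $\{f(z_n^k)\}_k$ infinite, and by the standard diagonal thinning from Theorem~\ref{t3} arrange the double array $(z_n^k)$ to be $f$-injective with all values in $Y \setminus \{f(x)\}$. Since $\{x\} \cup \{x_n\} \subset \overline{\{z_n^k : n, k\}}$, Fr\'echet produces $(w_j) \to x$ in the array. Were $\{f(w_j)\}$ finite, $f$-injectivity of the array would confine $(w_j)$ to finitely many points; a constant subsequence at some array element $z$ would satisfy $f(z) \ne f(x)$ and hence $z \ne x$, violating sequentially Hausdorff together with $w_j \to x$. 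Therefore $\{f(w_j)\}$ is infinite and $x \in \SI_f$.

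For (2), the only real work is $\LI'_f \subset \SI_f$. Given $x \in \LI'_f$, closedness of $\LI_f$ and the Fr\'echet property yield $x_n \in \LI_f \setminus \{x\}$ with $x_n \to x$. The lemma immediately places every $x_n$ with finite $f[x_n]$ into $\SI_f$, so after passing to a subsequence I may assume either all $x_n \in \SI_f$ or all $x_n \in \LI_f \setminus \SI_f$ (so all $f[x_n]$ are infinite). In the first case I repeat the construction from (1). In the second I pick distinct $y_n \in f[x_n]$ with $y_n \ne f(x)$ and use Fr\'echet to obtain $(z_n^k)_k \subset f^{-1}(y_n)$ converging to $x_n$. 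Either way the resulting array avoids $f(x)$ and has the crucial property that each $f$-value occurs in only one row. Fr\'echet supplies $(w_j) \to x$ drawn from the array; were $\{f(w_j)\}$ finite, $(w_j)$ would be trapped in finitely many rows, and pigeonholing plus sequentially Hausdorff forbids both alternatives: an eventually constant subsequence at some array point $z$ is ruled out because $f(z) \ne f(x)$ forces $z \ne x$, and an injective subsequence $z_{n_0}^{k_i}$ within a single row with $k_i \to \infty$ converges to $x_{n_0} \ne x$. Hence $\{f(w_j)\}$ is infinite and $x \in \SI_f$.

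For (3), assume $X$ is strong Fr\'echet and take $x \in \LI_f$. If $f[x]$ is finite, the lemma gives $x \in \SI_f$. If $f[x]$ is infinite, pick distinct $y_m \in f[x]$ and set $A_n = \bigcup_{m \ge n} f^{-1}(y_m)$; this is a decreasing family with $x \in \overline{A_n}$ since $x \in \overline{f^{-1}(y_m)}$ for every $m \ge n$. Strong Fr\'echet extracts $a_n \in A_n$ with $a_n \to x$, and since $f(a_n) = y_{m_n}$ for some $m_n \ge n$, the indices $m_n$ tend to infinity, making $\{f(a_n)\}$ infinite and placing $x$ in $\SI_f$. The main obstacle is the trapping argument in (2): without strong Fr\'echet the sliding $A_n$ construction is unavailable, so the row-disjoint $f$-values together with sequentially Hausdorff must replace it, and reconciling the $\SI_f$-row and $\LI_f \setminus \SI_f$-row constructions into a single extraction is the most delicate piece of bookkeeping.
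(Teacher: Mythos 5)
Your argument is correct, and while part (1) essentially reproduces the paper's proof (diagonal thinning to an $f$-injective array whose values avoid $f(x)$, extraction of a convergent sequence from the array, and a sequential-Hausdorffness contradiction), your treatment of (2) and (3) takes a genuinely different route. The paper proves both inclusions by inductively building an increasing chain of finite sets $F_n\subset Y$ together with sequences whose values avoid $F_{n-1}$, splitting afterwards according to whether the rows are $f$-constant or $f$-injective, and in (3) it feeds the tails $A_n=\{x_{k,m}:k\ge n\}$ of that construction into the strong Fr\'echet property. You instead isolate the oscillation set $f[z]$ and the lemma that a point of $\LI_f$ with finite oscillation lies in $\SI_f$ (needing only the Fr\'echet property). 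That lemma dispatches (3) by the dichotomy on whether $f[x]$ is finite or infinite --- the infinite case being exactly the decreasing-family argument from the second half of the paper's proof of Theorem~\ref{t1} applied to $A_n=\bigcup_{m\ge n}f^{-1}(y_m)$ --- and in (2) it reduces the hard case to rows that are $f$-constant with pairwise distinct values $y_n\in f[x_n]$, after which your ``trapped in finitely many rows'' extraction (rows are disjoint since the value determines the row, so finitely many values pin the sequence to one row, and sequential Hausdorffness kills both the constant and the injective-within-a-row alternatives) is sound. Your version is arguably cleaner in (3) and makes visible that the compact-preserving hypothesis is never used in this theorem; the paper's single inductive scheme has the advantage of handling (2) and (3) uniformly. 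One cosmetic point: in (1) the reduction to $x_n\ne x$ does not need sequential Hausdorffness, since if some $x_n$ equals $x$ you are already done.
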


\begin{proof} 1. Given any point $x$ in the closure $\overline{\SI}_f$ of $\SI_f$, apply the Fr\'echet property of $X$ and find a sequence $\{x_n\}_{n\in\w}\subset\SI_f$ that converges to $x$. For every $n\in\w$ the definition of the set $\SI_f\ni x_n$ yields an $f$-injective sequence $(x_{n,m})_{m\in\w}$ that converges to $x_n$. By a standard diagonal inductive procedure, we can replace the sequences $(x_{n,m})_{m\in\w}$ by suitable subsequences and assume that the double sequence $(x_{n,m})_{n,m\in\w}$ is $f$-injective and the infinite set $\{f(x_{n,m})\}_{n,m\in\w}$ does not contain $f(x)$. Since the set $\{x_{n,m}\}_{n,m\in\w}\not\ni x$ contains $x$ in its closure, by the Fr\'echet property of $X$, there is an injective sequence $(z_k)_{k\in\w}\subset \{x_{n,m}\}_{n,m\in\w}$ that converges to $x$. The $f$-injectivity of the double sequence $(x_{n,m})_{n,m\in\w}$ implies the $f$-injectivity of the sequence $(z_k)_{k\in\w}$, which witnesses that $f$ is sequentially infinite at $x$ and hence $x\in\SI_f$.
\smallskip

2. The inclusion $\SI_f\subset\LI_f$ trivially follows from the definitions. To prove that $\LI'_f\subset\SI_f$, fix a non-isolated point $x$ of the set $\LI_f$ and using the Fr\'echet property of $X$, find a sequence $\{x_n\}_{n\in\w}\subset \LI_f\setminus\{x\}$ that converges to $x$.

Let $F_{-1}=\{f(x),f(x_0)\}$. By induction, for every $n\in\w$ we shall construct a finite subset $F_n\subset X$ and a sequence $(x_{n,m})_{m\in\w}$ convergent to the point $x_n$ such that
\begin{enumerate}
\item[(a)] $\{f(x_{n,m})\}_{m\in\w}\subset Y\setminus F_{n-1}$;
\item[(b)] the sequence $(x_{n,m})_{m\in\w}$ is either $f$-injective or $f$-constant;
\item[(c)] $F_{n-1}\cup \{f(x_{n+1})\}\subset F_{n}$;
\item[(d)] $\{f(x_{n,m})\}_{m\in\w}\subset F_{n}$ if the sequence $(x_{n,m})_{m\in\w}$ is $f$-constant.
\end{enumerate}

Assume that for some $n\in\w$ the finite set $F_{n-1}$ has been constructed. The condition (c) of the inductive construction guarantees that $f(x_n)\in F_{n-1}$. Since $f$ is locally infinite at $x_n$, the point $x_n$ does not belong to the interior of the set $f^{-1}(F_{n-1})$. Consequently, the set $A_n=X\setminus f^{-1}(F_{n-1})$ contains $x_n$ in its closure. By the Fr\'echet property of $X$, there is a sequence $\{x_{n,m}\}_{m\in\w}\subset A_n$ which converges to $x_n$. Passing to a subsequence we can additionally assume that this sequence is either $f$-injective or $f$-constant. Put $F_{n+1}=F_n\cup\{f(x_{n+1})\}$ if the sequence $(x_{n,m})_{m\in\w}$ is $f$-injective and  $F_n=F_{n-1}\cup\{f(x_{n+1})\}\cup\{f(x_{n,m}):m\in\w\}$, otherwise.

After completing the inductive construction, we obtain an increasing sequence $(F_n)_{n\in\w}$ of finite subsets of $Y$ and $f$-injective or $f$-constant sequences $(x_{n,m})_{m\in\w}$, $n\in\w$,  with $\lim_{m\to\infty} x_{n,m}=x_n$.
Replacing $(x_n)_{n\in\w}$ by a suitable subsequence, we can assume that either for all $n\in\w$ the sequence $(x_{n,m})_{m\in\w}$ is $f$-constant or for all $n\in\w$ this sequence of $f$-injective.

In the second case we can apply a standard inductive diagonal argument and replacing each sequence $(x_{n,m})_{m\in\w}$, $n\in\w$, by a suitable subsequence, assume that the double sequence $(x_{n,m})_{n,m\in\w}$ if $f$-injective.

Since the set $\{x_{n,m}\}_{n\in\w}$ contains the set $\{x\}\cup\{x_n\}_{n\in\w}$ in its closure, the Fr\'echet property of $X$ yields an injective sequence $\{z_k\}_{k\in\w}\subset \{x_{n,m}\}_{n,m\in\w}$ that converges to $x$.

If for all $n\in\w$ the sequence $(x_{n,m})_{m\in\w}$ is $f$-injective, then so is the double sequence $(x_{n,m})_{n,m\in\w}$ and its injective subsequence $(z_k)_{k\in\w}$, which witnesses that $f$ is sequentially infinite at $x$.

Now assume that for all $n\in\w$ the sequence $(x_{n,m})_{m\in\w}$ is $f$-constant.
Since the space $X$ is sequentially Hausdorff and $(z_k)_{k\in\w}$ converges to $x\ne x_n$, $n\in\w$, the set $\big\{n\in\w:\{z_k\}_{k\in\w}\cap \{x_{n,m}\}_{m\in\w}\ne\emptyset\big\}$ is infinite.
By the construction, for any distinct numbers $n,k\in\w$ the singletons $\{f(x_{n,m})\}_{m\in\w}$ and $\{f(x_{k,m})\}_{m\in\w}$ are distinct, which implies that the set $\{f(z_k)\}_{k\in\w}$ is infinite and hence the sequence $(z_k)_{k\in\w}$ witnesses that the function $f$ is sequentially infinite at $x$. So, $x\in\SI_f$.
\smallskip

3. Assuming that $X$ is strong Fr\'echet, we shall prove that each point $x\in\LI_f$ belongs to $\SI_f$. Assume conversely that $x\notin\SI_f$. Let $F_{-1}=\{f(x)\}$. By induction, for every $n\in\w$ we shall construct a finite subset $F_n\subset X$ and an $f$-constant sequence $(x_{n,m})_{m\in\w}$ convergent to the point $x$ such that
\begin{enumerate}
\item $\{f(x_{n,m})\}_{m\in\w}\subset Y\setminus F_{n-1}$;
\item $F_{n}=F_{n-1}\cup \{f(x_{n,m})\}_{m\in\w}$.
\end{enumerate}

Assume that for some $n\in\w$ the finite set $F_{n-1}$ has been constructed. Since $f$ is locally infinite at $x$, the point $x$ does not belong to the interior of the set $f^{-1}(F_{n-1})$. Consequently, the set $X_n=X\setminus f^{-1}(F_{n-1})$ contains $x$ in its closure. By the Fr\'echet property, there is a sequence $\{x_{n,m}\}_{m\in\w}\subset X_n$ which converges to $x$. Moreover, we can assume that this sequence is $f$-injective or $f$-constant. Since $x\notin\SI_f$,
 the sequence $\{x_{n,m}\}_{n\in\w}$ is $f$-constant and we can put  $F_n=F_{n-1}\cup\{f(x_{n,m}):m\in\w\}$.

After completing the inductive construction, we obtain an increasing sequence $(F_n)_{n\in\w}$ of finite subsets of $Y$ and a sequence $(x_{n,m})_{m\in\w}$, $n\in\w$, of $f$-constant sequences that converge to $x$. For every $n\in\w$ put $A_n=\{x_{k,m}:k,m\in\w,\;k\ge n\}$. Since $x\in\bigcap_{n\in\w}\bar A_n$, the strong Fr\'echet property of $X$ yields a sequence of points $a_n\in A_n$, $n\in\w$, which converges to $x$. Since $f(a_n)\in\bigcup_{n\in\w}F_k\setminus \bigcup_{k=0}^{n-1}F_k$, the set $\{f(a_n)\}_{n\in\w}$ is infinite,  witnessing that the function $f$ is sequentially infinite at $x$ and hence $x\in\SI_f$.
\end{proof}

Theorems~\ref{t3} and \ref{t4} imply:

\begin{corollary}\label{c3} Let $f:X\to Y$ be a compact-preserving function from a sequentially Hausdorff space $X$ to a Hausdorff space $Y$.
\begin{enumerate}
\item If $X$ is Fr\'echet, then the restriction $f|\LI'_f$ is continuous.
\item If $X$ is strong Fr\'echet, then the restriction $f|\LI_f$ is continuous.
\end{enumerate}
\end{corollary}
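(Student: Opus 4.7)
The plan is to deduce this corollary directly from Theorems~\ref{t3} and~\ref{t4}, without any new construction. The key observation is that once we know the restriction $f|\SI_f$ is continuous, the continuity of $f|Z$ for any subspace $Z\subset\SI_f$ (or any subspace equal to $\SI_f$) follows automatically, since restricting a continuous function to a subspace yields a continuous function.

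For part (1), I assume $X$ is sequentially Hausdorff and Fréchet. Theorem~\ref{t4}(2) then gives the inclusion $\LI'_f\subset\SI_f$. Since $X$ is Fréchet, Theorem~\ref{t3} applies and tells us that $f|\SI_f$ is continuous. Restricting this map further to the subspace $\LI'_f\subset\SI_f$ yields the desired continuity of $f|\LI'_f$.

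For part (2), I assume $X$ is sequentially Hausdorff and strong Fréchet. Since strong Fréchet implies Fréchet, the hypotheses of both Theorem~\ref{t3} and Theorem~\ref{t4} are satisfied. Theorem~\ref{t4}(3) upgrades the inclusion from part~(2) of that theorem to the equality $\SI_f=\LI_f$. Theorem~\ref{t3} then gives continuity of $f|\SI_f=f|\LI_f$, as required.

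There is essentially no obstacle here, since all the substantive work was already done in Theorems~\ref{t3} and~\ref{t4}; the only thing to verify is that the hypotheses match up correctly (in particular that ``strong Fréchet'' implies the ``Fréchet'' hypothesis of Theorem~\ref{t3}, as noted in the chain of implications stated early in the paper), and that subspace restriction preserves continuity.
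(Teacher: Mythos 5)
Your proposal is correct and is exactly the derivation the paper intends: the paper states Corollary~\ref{c3} with the single line ``Theorems~\ref{t3} and \ref{t4} imply'' and no further argument, and your filling-in (Theorem~\ref{t4}(2) gives $\LI'_f\subset\SI_f$ and Theorem~\ref{t4}(3) gives $\SI_f=\LI_f$ in the strong Fr\'echet case, after which Theorem~\ref{t3} and restriction to a subspace finish the job) is the intended reasoning with all hypotheses correctly matched.
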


\begin{corollary} For each compact-preserving function $f:X\to Y$ from a sequentially Hausdorff Fr\'echet space $X$ to a Hausdorff space $Y$ there is a point $x\in X$ at which $f$ is locally finite or continuous.
\end{corollary}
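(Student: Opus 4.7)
My plan is to argue by a simple dichotomy on whether every point of $X$ is locally infinite, and then invoke Corollary~\ref{c3}(1).

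First, if $\LI_f\ne X$, then by the very definition of $\LI_f$, any point $x\in X\setminus\LI_f$ is a point at which $f$ is locally finite, and we are done. So the interesting case is when $\LI_f=X$, i.e.\ $f$ is locally infinite at every point.

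In that case I would observe that $X$ cannot have any isolated point. Indeed, if $x\in X$ were isolated, then the singleton $\{x\}$ would be a neighborhood of $x$ whose image $\{f(x)\}$ is finite, contradicting $x\in\LI_f$. Consequently every point of $\LI_f=X$ is non-isolated in $\LI_f$, so $\LI'_f=\LI_f=X$.

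Now the assumption that $X$ is a sequentially Hausdorff Fr\'echet space lets me apply Corollary~\ref{c3}(1): the restriction $f|\LI'_f$ is continuous. But $\LI'_f=X$, so $f$ itself is continuous everywhere, and in particular at any chosen point $x\in X$. I do not expect a genuine obstacle here; the only subtlety is noticing that under $\LI_f=X$ we automatically get $\LI'_f=X$ (because isolated points would force local finiteness), which is what makes Corollary~\ref{c3}(1) yield continuity of $f$ rather than just continuity of a proper restriction.
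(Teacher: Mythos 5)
Your proof is correct and follows essentially the same route as the paper: dichotomize on whether $\LI_f=X$, and in that case apply Corollary~\ref{c3}(1) after noting $\LI'_f=\LI_f=X$. The only (harmless) difference is that you observe the isolated-point case is vacuous when $\LI_f=X$ (isolated points are automatically points of local finiteness), whereas the paper handles it separately by remarking that every function is continuous at an isolated point.
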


\begin{proof} If there exists a point $x\in X\setminus\LI_f$, then $f$ is locally finite at $x$. So, we assume that $\LI_f=X$. If the space $X=\LI_f$ has an isolated point $x\in X$, then $f$ is continuous at $x$. If $X=\LI_f$ has no isolated points, then $\LI'_f=\LI_f=X$ and the function $f=f|\LI'_f$ is continuous according to Corollary~\ref{c3}(1).
\end{proof}

Now we present two examples showing that Theorem~\ref{t3}, \ref{t4} and Corollary~\ref{c3} cannot be improved.

\begin{example} There is a function $f:X\to \IR$ from a Hausdorff Fr\'echet countable space $X$ such that $\SI_f=\LI'_f\ne\LI_f$ and $f|\LI_f$ is discontinuous.
\end{example}

\begin{proof} Consider the space $X=\w^0\cup\w^1\cup\w^3$ endowed with the topology $\tau$ in which
\begin{enumerate}
\item[(a)] each point $(k,n,m)\in \w^3\subset X$ is isolated;
\item[(b)] a set $U\subset X$ is a neighborhood of a point $(k)\in\w^1\subset X$ if and only if $(k)\in U$ and for every $n\in\w$ there a number $m_n\in\w$ such that $(k,n,m)\in U$ for all $m\ge m_n$;
\item[(c)] a set $V\subset X$ is a neighborhood of the point $\emptyset\in\w^0\in X$ if and only if $\emptyset\in V$ and there is a number $k_0\in\w$ such that for all $k\ge k_0$ and $n,m\in\w$ we get $(k)\in V$ and $(k,n,m)\in V$.
\end{enumerate}
It is easy to see that the space $X$ is Fr\'echet but not strong Fr\'echet.

Now define a function $f:X\to \IR$ by the formula
$$f(x)=\begin{cases}
0&\mbox{if $x=\emptyset\in\w^0$}\\
1&\mbox{if $x\in \w^1$}\\
2^{-(k+n)}&\mbox{if $x=(k,n,m)\in\w^3$}
\end{cases}
$$It is easy to check that the function is compact-preserving, $\LI_f=\w^0\cup\w^1$, $\SI_f=\LI_f'=\w^0$, and $f|\LI_f$ is discontinuous at the point $\emptyset\in\w^0$.
\end{proof}

A topological space $X$ is called a {\em $s_\w$-space} if there is a countable family $\K$ of compact metrizable subsets of $X$ generating the topology of $X$ in the sense that a set $U\subset X$ is open if and only if for each compact set $K\in\K$ the intersection $U\cap K$ is open in $K$. It is well-known that $s_\w$-spaces are sequential and normal (even stratifiable).

\begin{example} There is a countable $s_\w$-space $X$ and a compact-preserving function $f:X\to \IR$ which is locally infinite and discontinuous at each point $x\in X$.
\end{example}

\begin{proof} Consider the space $X=\bigcup_{n\in\w}\w^n$ of all finite sequences of natural numbers, endowed with the topology $\tau$ in which a subset $U\subset X$ is a neighborhood of a point $s=(s_0,\dots,s_{n-1})\in\w^n\subset X$ if and only if $s\in U$ and there exists a number $m_0\in\w$ such that for each $m\ge m_0$ the point $s\hat{\phantom{s}}m=(s_0,\dots,s_{n-1},m)$ belongs to $U$.
It follows that the sequence $(s\hat{\phantom{s}}m)_{m\in\w}$ converges to $s$ and hence the set $K_s=\{s\}\cup\{s\hat{\phantom{s}}m\}_{m\in\w}$ is compact. It is easy to see that $X$ is a countable $s_\w$-space whose topology is generated by the countable family $\K=\{K_s:s\in X\}$ of compact subsets of $X$. The space $X$ was first described by Arhangelskii and Franklin \cite{AF} and is known in General Topology as the {\em the Arhangelskii-Franklin space}.

Now consider the function $f:X\to \IR$ assigning to each sequence $s=(s_0,\dots,s_{n-1})\in \w^n\subset X$ the real number
$$f(s)=\begin{cases}
n+1&\mbox{if $n$ is even,}\\
n+2^{-s_{n-1}}&\mbox{if $n$ is odd.}
\end{cases}
$$
It is easy to see that the function $f$ is compact-preserving. On the other hand, each non-empty open set $U\subset X$ has unbounded image $f(U)$ in $\IR$, which implies that $f$ locally infinite and discontinuous at each point $x\in X$.
\end{proof}

\begin{remark} In the proof of our results about compact-preserving functions we did not use the full strength of the compact-preserving property. What we actually used was the compactness of the images of compact subsets with a unique non-isolated point. On the other hand, by transfinite induction it is easy to construct a function $f:[0,1]\to[0,1]$ which maps each uncountable compact subset of $[0,1]$ onto $[0,1]$. Such a function $f$ preserves the compactness of uncountable compact sets and has the Darboux property but is everywhere discontinuous.
\end{remark}


\begin{thebibliography}{}

\bibitem{AP} F.G.~Arenas, M.L.~Puertas, {\em
Generalizing continuity by means of connectedness and compactness},
Questions Answers Gen. Topology {\bf 21}:1 (2003) 1--9.

\bibitem{AF} A.V.~Arhangel'skii, S.P~Franklin, {\em Ordinal invariants for topological spaces}, Michigan Math. J. {\bf 15} (1968) 313--320.

\bibitem{En} R.~Engelking, {\em General Topology}, Heldermann Verlag, Berlin, 1989.

\bibitem{GJSS} J.~Gerlits, I.~Juh\'asz, L.~Soukup, Z.~Szentmiklossy, {\em
Characterizing continuity by preserving compactness and connectedness},
Topology Appl. {\bf 138}:1-3 (2004) 21--44.

\bibitem{GMS} J.~G\'omez-Merino,  G.~Munoz-Fernandez, J.~Seoane-Sepulveda, {\em
A characterization of continuity revisited}, Amer. Math. Monthly {\bf 118}:2 (2011), 167--170.

\bibitem{GRS} R.~Graham, B.~Rothschild, J.~Spencer, {\em Ramsey Theory}, A Wiley-Intersci. Publ. John Wiley \&\ Sons, Inc., New York, 1990.

\bibitem{Halfar} E.~Halfar, {\em Conditions implying continuity of functions},
Proc. Amer. Math. Soc. {\bf 11} (1960) 688--691.

\bibitem{KU} V.L.~Klee, W.R.~Utz, {\em Some remarks on continuous transformations},
Proc. Amer. Math. Soc. {\bf 5} (1954) 182--184.

\bibitem{McM} E.~McMillan, {\em On continuity conditions for functions}, Pacific J. Math. {\bf 32} (1970) 479--494.

\bibitem{Rowe} C.H.~Rowe, {\em Note on a pair of properties which characterize continuous functions}, Bull. Amer. Math. Soc. 32 (1926), no. 3, 285--287.

\bibitem{Vel} D.~Velleman, {\em Characterizing continuity}, Amer. Math. Monthly {\bf 104}:4 (1997) 318--322.

\bibitem{White68} D.J.~White, {\em Functions preserving compactness and connectedness are continuous},
J. London Math. Soc. {\bf 43} (1968) 714--716.

\bibitem{White71}
D.J.~White, {\em Functions preserving compactness and connectedness}, J. London Math. Soc. (2) {\bf 3} (1971) 767--768.

\bibitem{Whyburn27} G.T.~Whyburn, {\em The most general closed point set over which continuous function may be defined by certain properties},
Bull. Amer. Math. Soc. {\bf 33}:2 (1927) 185--188.

\bibitem{Whyburn65} G.T.~Whyburn, {\em Continuity of multifunctions}, Proc. Nat. Acad. Sci. U.S.A. {\bf 54} (1965) 1494--1501.


\end{thebibliography}
\end{document}